\title{A Recursive Relation for Bipartition Numbers}
\author{Yen-Chi Roger Lin}
\address{Department of Mathematics, National Taiwan Normal University, Taipei 116, Taiwan}
\email{yclin@math.ntnu.edu.tw}
\author{Shu-Yen Pan}
\address{Department of Mathematics, National Tsing Hua University, Hsinchu 300, Taiwan}
\email{sypan@math.nthu.edu.tw}
\keywords{bipartition number, partition number, generating function, Lusztig's symbol}
\subjclass[2020]{Primary: 05A17, 11P87; Secondary: 20C33}
\date{\today}
\begin{document}

\begin{abstract}
We establish a recursive relation for the bipartition number $p_2(n)$
which might be regarded as an analogue of Euler's recursive relation for the partition number $p(n)$.
Two proofs of the main result are proved in this article.
The first one is using the generating function,
and the second one is using combinatoric objects (called ``symbols'') created by Lusztig for studying
representation theory of finite classical groups.
\end{abstract}

\maketitle
\tableofcontents

\section{Introduction}
For a positive integer $n$, let $p(n)$ (resp.~$p_2(n)$) denote the number of partitions
(resp.~bipartitions) of $n$.
By convention, we define $p(0)=p_2(0)=1$, and
$p(x)=p_2(x)=0$ if $x$ is not a non-negative integer.
For $n\geq 1$, Euler obtained the following recursive relation for $p(n)$:
\[
p(n)=\sum_{k\in\bbZ\smallsetminus\{0\}}(-1)^{k-1}p\bigl(n-\tfrac{1}{2}k(3k-1)\bigr)
\]
(cf.~\cite{andrews} corollary 1.8).
Our result in this article is the following recursive relation for $p_2(n)$:

\begin{thm}\label{0101}
For $n\geq 0$, we have
\[
p_2(n)=p\bigl(\tfrac{n}{2}\bigr)+\sum_{k\in\bbZ\smallsetminus\{0\}}(-1)^{k-1} p_2\bigl(n-k^2\bigr).
\]
\end{thm}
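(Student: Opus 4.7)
The plan is to translate Theorem \ref{0101} into an identity of generating functions and establish it via a classical theta-function identity. Set
\[
P(q) = \sum_{n\ge 0} p(n) q^n = \prod_{n\ge 1}\frac{1}{1-q^n},
\qquad
P_2(q) = \sum_{n\ge 0} p_2(n) q^n = P(q)^2.
\]
Rewriting the claim in the symmetric form $\sum_{k\in\bbZ}(-1)^k p_2(n-k^2) = p(n/2)$, with the convention $p(n/2)=0$ for odd $n$, and noting that $\sum_{n\ge 0} p(n/2) q^n = P(q^2)$, a direct Cauchy-product computation shows that Theorem \ref{0101} is equivalent to the generating function identity
\[
P_2(q)\,\Bigl(\sum_{k\in\bbZ}(-1)^k q^{k^2}\Bigr) = P(q^2).
\]

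Next I would invoke Jacobi's triple product
\[
\sum_{k\in\bbZ} z^k q^{k^2} = \prod_{n\ge 1}(1-q^{2n})(1+zq^{2n-1})(1+z^{-1}q^{2n-1})
\]
and specialize at $z=-1$ to obtain
\[
\sum_{k\in\bbZ}(-1)^k q^{k^2} = \prod_{n\ge 1}(1-q^{2n})(1-q^{2n-1})^2.
\]
Using the splitting $\prod_{n\ge 1}(1-q^n) = \prod_{n\ge 1}(1-q^{2n})(1-q^{2n-1})$ to eliminate the odd factors, the right-hand side reduces to $\prod_{n\ge 1}(1-q^n)^2/(1-q^{2n})$, which is exactly $P(q^2)/P_2(q)$. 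Cross-multiplying yields the displayed identity, and extracting the coefficient of $q^n$ completes the proof.

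The substantive input is the theta-style evaluation of $\sum_{k\in\bbZ}(-1)^k q^{k^2}$, and this is the step I expect to be the main obstacle if one insists on a self-contained exposition; it is the bipartition analogue of the role played by Euler's pentagonal number theorem in the classical recursion for $p(n)$. For a combinatorial alternative, one would construct a sign-reversing involution on pairs $(k,\lambda)$ with $\lambda$ a bipartition of $n-k^2$ whose fixed points are ``doubled'' bipartitions $(\mu,\mu)$ with $|\mu|=n/2$, thereby matching the $p(n/2)$ term; this viewpoint is presumably close to the symbols-based proof promised in the abstract.
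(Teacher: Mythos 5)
Your proposal is correct and follows essentially the same route as the paper's first proof: both reduce the recursion to the generating-function identity $P_2(q)\sum_{k}(-1)^kq^{k^2}=P(q^2)$ and evaluate the theta series via Jacobi's triple product (the paper reaches $\prod_k(1-q^k)^2/(1-q^{2k})$ through Euler's distinct-equals-odd identity, whereas you use the even/odd splitting of $\prod_k(1-q^k)$ directly, a purely cosmetic difference). Your closing remark is also on target: the paper's second proof is indeed a combinatorial argument via Lusztig's symbols whose signed count has fixed points exactly at the degenerate (doubled) bipartitions, contributing the $p(n/2)$ term.
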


We will provide two proofs for the above recursive formula from different point of views.
In Section 2, the theorem is first proved by manipulating the respective generating functions.
In Section 3, we will derive the recursive relation from Lusztig's theory of ``symbols'' and the binomial theorem.

It is well known that the irreducible characters of the Coxeter  group of type $B_n$ is parametrized by the set of
bipartitions of $n$ (\cf.~\cite{Geck-Pfeiffer} \S 5.5),
just like the set of irreducible characters of the Coxeter group of type $A_n$ (i.e., the symmetric group $S_n$)
is parametrized by the set of partitions of $n$.
It is not surprising that Theorem~\ref{0101} has applications to the representation theory of finite classical groups.
Two consequences are given in Remark~\ref{0302} and Remark~\ref{0303}.

In the appendix, we provide a new proof of a congruence relation for the bipartition number $p_2(n)$ as a reference.

\section{Generating Functions}

\subsection{Partitions and bipartitions}
For a nonnegative integer $n$,
let $\calp(n)$ denote the set of partitions of $n$.
By convention, we define $\calp(0)$ to be the set of ``the empty partition'',
and $\calp(x)$ to be the empty set if $x$ is not a nonnegative integer.
Let $p(n)=|\calp(n)|$ where $|X|$ denotes the number of elements of a finite set $X$.

Let $\calp_2(n)$ denote the set of \emph{bi-partitions} of $n$, i.e.,
\[
\textstyle
\calp_2(n)=\left\{\,\sqbinom{\lambda}{\mu}\mid|\lambda|+|\mu|=n\,\right\}
\]
where $|\lambda|$ means the sum of parts of $\lambda$.
Let $p_2(n)=|\calp_2(n)|$.
We know that $p_2(0)=1$, and $p_2(x)=0$ if $x$ is not a nonnegative integer.
For a bipartition $\Sigma=\sqbinom{\lambda}{\mu}$,
its \emph{transpose} is given by $\Sigma^\rmt=\sqbinom{\mu}{\lambda}$.
A bipartition $\Sigma$ is called \emph{degenerate} if $\Sigma^\rmt=\Sigma$, it is called \emph{non-degenerate}, otherwise.
If $\Sigma\in\calp_2(n)$ is degenerate, then $\Sigma=\sqbinom{\lambda}{\lambda}$ for some $\lambda\in\calp(\tfrac{n}{2})$.
Thus, the number of degenerate bipartitions in $\calp_2(n)$ is equal to $p(\tfrac{n}{2})$.

\subsection{First proof of Theorem~\ref{0101}}

\begin{lem}\label{0204}
We have
\[
\sum_{n\in\bbZ} (-1)^n z^{n^2}
=\prod_{k=1}^\infty\frac{(1-z^k)^2}{1-z^{2k}}.
\]
\end{lem}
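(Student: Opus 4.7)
\medskip

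\noindent\textbf{Proof proposal.}
My plan is to recognize the left-hand side as a special value of a Jacobi theta function and invoke the Jacobi triple product identity
\[
\sum_{n\in\bbZ} x^{n^2}y^n \;=\; \prod_{n=1}^{\infty}(1-x^{2n})(1+x^{2n-1}y)(1+x^{2n-1}y^{-1}),
\]
which I will quote as a known classical fact (e.g., from Andrews' book already cited in the introduction). Setting $x=z$ and $y=-1$ collapses the two factors $(1+x^{2n-1}y)$ and $(1+x^{2n-1}y^{-1})$ to a common factor $(1-z^{2n-1})^2$, so the triple product specializes to
\[
\sum_{n\in\bbZ}(-1)^n z^{n^2} \;=\; \prod_{n=1}^{\infty}(1-z^{2n})(1-z^{2n-1})^{2}.
\]

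The remaining step is purely formal: I would show that the product in the statement of the lemma coincides with the product just obtained. For this I split the index set of the numerator according to parity, writing
\[
\prod_{k=1}^{\infty}(1-z^{k})^{2}
\;=\;\prod_{k=1}^{\infty}(1-z^{2k})^{2}\cdot\prod_{k=1}^{\infty}(1-z^{2k-1})^{2},
\]
and then cancel one copy of $\prod_{k}(1-z^{2k})$ against the denominator $\prod_{k}(1-z^{2k})$ of the lemma's right-hand side. What remains is exactly $\prod_{k}(1-z^{2k})(1-z^{2k-1})^{2}$, matching the specialization of the triple product.

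I do not foresee a serious obstacle: once the triple product is granted, the argument is a two-line specialization followed by an elementary reindexing. The only delicate point is the formal convergence/manipulation of the infinite products, which is standard in the ring of formal power series $\bbZ[\![z]\!]$, and this is the setting the rest of the paper uses anyway. If the authors wish to avoid quoting the triple product, one could instead prove the identity directly by Euler's device of expanding $\prod_{k\ge 1}(1-z^{2k-1})^{2}(1-z^{2k})$ as a sum over pairs of partitions into distinct odd parts together with partitions into distinct even parts and establishing a sign-reversing involution whose fixed points are indexed by squares; but this is considerably longer and the triple-product route seems clearly preferable here.
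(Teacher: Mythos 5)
Your proposal is correct. Both you and the paper rest on the Jacobi triple product to obtain the intermediate identity $\sum_{n\in\bbZ}(-1)^n z^{n^2}=\prod_{k\ge 1}(1-z^{2k-1})^2(1-z^{2k})$; the paper quotes the triple product in the form $\sum_n q^{n(n+1)/2}z^n=\prod_k(1+zq^k)(1+z^{-1}q^{k-1})(1-q^k)$ and reaches this via two successive substitutions ($z\mapsto z^{-1}$, $q\mapsto z^2$, then $z\mapsto -z$), whereas you use the theta-function form and specialize $y=-1$ in one step; these are equivalent. Where you genuinely diverge is the final matching step: the paper rewrites its product as $\prod_k(1-z^{2k-1})(1-z^k)$ and then invokes Euler's distinct-parts/odd-parts identity $\prod_k(1+z^k)=\prod_k(1-z^{2k-1})^{-1}$ to convert this into $\prod_k\frac{(1-z^k)^2}{1-z^{2k}}$, while you simply split the numerator $\prod_k(1-z^k)^2$ by parity of $k$ and cancel one copy of $\prod_k(1-z^{2k})$ against the denominator. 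Your finish is more elementary --- a pure reindexing in $\bbZ[\![z]\!]$ with no extra identity needed --- and is arguably cleaner; the paper's route has the minor expository benefit of displaying the connection to Euler's identity. Both are complete and correct.
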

\begin{proof}
We start with the \emph{Jacobi's triple product identity} (\cf.~\cite{andrews} theorem 2.8)
\begin{equation}\label{0203}
\sum_{n\in\bbZ}q^{\frac{n(n+1)}{2}}z^n
=\prod_{k=1}^\infty (1+zq^k)(1+z^{-1}q^{k-1})(1-q^k).
\end{equation}
In both sides of (\ref{0203}), we replace $z$ by $z^{-1}$, and replace $q$ by $z^2$ to obtain
\[
\sum_{n\in\bbZ} z^{n^2}
=\prod_{k=1}^\infty(1+z^{2k-1})^2(1-z^{2k}).
\]
Then we replace $z$ by $-z$ to get
\[
\sum_{n\in\bbZ}(-1)^n z^{n^2}
=\prod_{k=1}^\infty(1 - z^{2k-1})^2(1-z^{2k})
=\prod_{k=1}^\infty(1 - z^{2k-1})(1-z^k).
\]

Now for the right-hand side,
we need the \emph{Euler's identity}: $p_\rmd(n)=p_\rmo(n)$
where $p_\rmd(n)$ is the number of partitions into distinct parts of $n$,
and $p_\rmo(n)$ is the number of partitions into odd parts.
In terms of generating functions, we have
\[
\prod_{k=1}^\infty(1 + z^k)
=\sum_{n=0}^\infty p_\rmd(n) z^n
=\sum_{n=0}^\infty p_\rmo(n) z^n
=\prod_{k=1}^\infty\frac{1}{1-z^{2k-1}}.
\]
Therefore,
\begin{align*}
\prod_{k=1}^\infty(1-z^{2k-1})(1-z^k)
= \prod_{k=1}^\infty\frac{1-z^k}{1 + z^k}
&= \prod_{k=1}^\infty\frac{(1-z^k)^2}{(1+z^k)(1-z^k)} \\
&= \prod_{k=1}^\infty\frac{(1-z^k)^2}{1-z^{2k}},
\end{align*}
and the lemma is proved.
\end{proof}

\begin{proof}[First Proof of Theorem~\ref{0101}]
Recall that
\[
\sum_{n=0}^\infty p(n)z^n
=\prod_{k=1}^\infty\frac{1}{1-z^k}\qquad\text{and}\qquad
\sum_{n=0}^\infty p_2(n)z^n
=\prod_{k=1}^\infty\frac{1}{(1-z^k)^2}.
\]
By Lemma~\ref{0204}, we have
\[
\left(\sum_{n=0}^\infty p_2(n)z^n\right)
\left(\sum_{k\in\bbZ}(-1)^k z^{k^2}\right)
=\sum_{n=0}^\infty p(n)z^{2n},
\]
which implies that
\[
\sum_{k\in\bbZ}(-1)^k p_2(n - k^2)
=p\bigl(\tfrac{n}{2}\bigr),
\]
i.e., the theorem is proved.
\end{proof}

\section{Lusztig's Symbols}

\subsection{Definition of the symbols}
In this subsection and the next subsection, we recall the notions of ``symbols'' by Lusztig.
The materials are modified from \cite{lg} \S 2, \S 3 (see also \cite{pan-finite-unipotent} \S 2).

A \emph{symbol} is an ordered pair
$\Lambda=\binom{A}{B}=\binom{a_1,a_2,\ldots,a_{m_1}}{b_1,b_2,\ldots,b_{m_2}}$
of two finite subsets $A,B$ of nonnegative integers.
The elements in each of $A$ and $B$ are written in decreasing order, i.e.,
$a_1>a_2>\cdots>a_{m_1}$ and $b_1>b_2>\cdots>b_{m_2}$.
The sets $A,B$ in $\Lambda$ are also denoted by $\Lambda^*,\Lambda_*$, respectively,
and called the \emph{first row} and the \emph{second row} of $\Lambda$.
An element in $\Lambda^*$ or $\Lambda_*$ is called an \emph{entry} of $\Lambda$.
A symbol $\Lambda'$ is called a \emph{subsymbol} of $\Lambda$ and denoted by $\Lambda'\subset\Lambda$
if $\Lambda'^*\subset\Lambda^*$ and $\Lambda'_*\subset\Lambda_*$.
For a subsymbol $\Lambda'\subset\Lambda$, we define the substraction
$\Lambda\smallsetminus\Lambda'$ to be $\binom{\Lambda^*\smallsetminus\Lambda'^*}{\Lambda_*\smallsetminus\Lambda'_*}$.
For two symbols $\Lambda,\Lambda'$, their
\emph{union} is defined by $\Lambda\cup\Lambda'=\binom{\Lambda^*\cup \Lambda'^*}{\Lambda_*\cup\Lambda'_*}$.

For a symbol $\Lambda=\binom{A}{B}=\binom{a_1,a_2,\ldots,a_{m_1}}{b_1,b_2,\ldots,b_{m_2}}$,
its \emph{rank} ${\rm rk}(\Lambda)$, \emph{defect} ${\rm def}(\Lambda)$  and \emph{transpose} $\Lambda^\rmt$ are defined by
\begin{align*}
{\rm rk}(\Lambda) &= \sum_{i=1}^{m_1}a_i+\sum_{j=1}^{m_2}b_j-\biggl\lfloor\biggl(\frac{|A|+|B|-1}{2}\biggr)^2\biggr\rfloor,\\
{\rm def}(\Lambda) &= |A|-|B|,\\
\Lambda^\rmt &=\binom{B}{A}
\end{align*}
where $\lfloor x\rfloor$ denotes the floor function of a real number $x$.
It is not difficult to check that ${\rm rk}(\Lambda)\geq\bigl\lfloor\bigl(\frac{{\rm def}(\Lambda)}{2}\bigr)^2\bigr\rfloor$
for any symbol $\Lambda$.
A symbol $\Lambda$ is called \emph{degenerate} if $\Lambda^\rmt=\Lambda$;
it is called \emph{non-degenerate}, otherwise.

On the set of symbols , we define an equivalence relation generated by
\[
\binom{a_1,a_2,\ldots,a_{m_1}}{b_1,b_2,\ldots,b_{m_2}}
\sim \binom{a_1+1,a_2+1,\ldots,a_{m_1}+1,0}{b_1+1,b_2+1,\ldots,b_{m_2}+1,0}.
\]
It is easy to see that the ranks and the defects are invariant in a similarity class.
Let $\Phi_{n,d}$ denote the set of similarity classes of symbols of rank $n$ and defect $d$.
Note that $\Phi_{n,d}=\emptyset$ if $\bigl\lfloor(\frac{d}{2})^2\bigr\rfloor>n$.

We define a mapping
\[
\binom{a_1,a_2,\ldots,a_{m_1}}{b_1,b_2,\ldots,b_{m_2}}\mapsto
\sqbinom{a_1-(m_1-1),a_2-(m_1-2),\ldots,a_{m_1-1}-1,a_{m_1}}{b_1-(m_2-1),b_2-(m_2-1),\ldots,b_{m_2-1}-1,b_{m_2}}
\]
from symbols to bipartitions.
It is easy to see that two symbols in the same similarity class has the same image.
So the above mapping will be regarded as a mapping from the set of similarity classes of symbols to the set of
bipartitions.
Moreover, it is not difficult to check that the above mapping gives a bijection
\begin{equation}\label{0305}
\Upsilon\colon\Phi_{n,d}\longrightarrow\calp_2\bigl(n-\bigl\lfloor\bigl(\tfrac{d}{2}\bigr)^2\bigr\rfloor\bigr).
\end{equation}
In particular, $|\Phi_{n,d}|=p_2\bigl(n-(\tfrac{d}{2})^2\bigr)$ if $d$ is even.
Note that if a symbol $\Lambda$ of rank $n$ is degenerate, then ${\rm def}(\Lambda)=0$ and $\Upsilon(\Lambda)$ is a degenerate
bipartition in $\calp_2(n)$.
Finally, we define
\begin{equation}\label{0306}
\Phi^+_n =\bigcup_{d\equiv 0\pmod 4}\Phi_{n,d},\qquad
\Phi^-_n =\bigcup_{d\equiv 2\pmod 4}\Phi_{n,d},\qquad
\Phi_n =\Phi^+_n\cup\Phi^-_n =\bigcup_{d\text{ even}}\Phi_{n,d}.
\end{equation}

\begin{exam}\label{0201}
For $n=4$ and $d$ even, we have the following
\begin{align*}
\Phi_{4,0} &=\textstyle\bigl\{\,\Lambda,\Lambda^\rmt\mid\Lambda=\binom{4}{0},\binom{3}{1},\binom{4,1}{1,0},\binom{3,2}{1,0},\binom{3,1}{2,0},
\binom{3,0}{2,1},\binom{4,2,1}{2,1,0},\binom{3,2,1}{3,1,0},\binom{4,3,2,1}{3,2,1,0}\bigr\}\cup\bigl\{\binom{2}{2},\binom{2,1}{2,1}\,\bigr\}, \\
\Phi_{4,2} &=\textstyle\bigl\{\binom{4,0}{-},\binom{3,1}{-},\binom{3,2,1}{0},\binom{4,1,0}{1},\binom{3,2,0}{1},
\binom{3,1,0}{2},\binom{2,1,0}{3},\binom{4,2,1,0}{2,1},\binom{3,2,1,0}{3,1},\binom{4,3,2,1,0}{3,2,1}\bigr\}, \\
\Phi_{4,4} &=\textstyle\bigl\{\binom{3,2,1,0}{-}\bigr\}, \\
\Phi_{4,-d} &=\textstyle\{\,\Lambda^\rmt\mid\Lambda\in\Phi_{4,d}\,\}, \\
\Phi^+_4 &=\Phi_{4,0}\cup\Phi_{4,4}\cup\Phi_{4,-4}, \\
\Phi^-_4 &=\Phi_{4,2}\cup\Phi_{4,-2}.
\end{align*}
Note that $|\Phi_{4,0}|=20=p_2(4)$, $|\Phi_{4,2}|=|\Phi_{4,-2}|=10=p_2(3)$, and $|\Phi_{4,4}|=|\Phi_{4,-4}|=1=p_2(0)$.
\end{exam}

\subsection{Families associated to special symbols}
A symbol $Z=\binom{a_1,a_2,\ldots,a_m}{b_1,b_2,\ldots,b_m}\in\Phi_{n,0}$ is called \emph{special}
if $a_1\geq b_1\geq a_2\geq b_2\geq\cdots\geq a_m\geq b_m$.
For a special symbol $Z$, let $Z_\rmI=Z\smallsetminus \binom{Z^*\cap Z_*}{Z^*\cap Z_*}$ be the subsymbol of ``singles'' in $Z$.
Clearly, ${\rm def}(Z_\rmI)=0$.
The \emph{degree} $\deg(Z)$ is defined to be $|(Z_\rmI)^*|=|(Z_\rmI)_*|$.
Note that a degenerate symbol is always special.
Moreover, a special symbol $Z$ with $\deg(Z)=0$ means that $Z$ is degenerate.
For a subsymbol $M\subset Z_\rmI$, we define
\[
\Lambda_M=(Z\smallsetminus M)\cup M^\rmt,
\]
i.e., $\Lambda_M$ is obtained from $Z$ by switching the row position of each entry of $M$.
It is clear that ${\rm def}(\Lambda_M)=-2\,{\rm def}(M)$.
For a special symbol $Z\in\Phi_{n,0}$, we define
\[
\Phi_Z=\{\,\Lambda_M\mid M\subset Z_\rmI\,\},
\]
i.e., $\Phi_Z$ is the set of symbols of exactly the same entries of $Z$.
It is clear that $|\Phi_Z|=2^{2\deg(Z)}$.
Moreover, we have
\begin{align}\label{0202}
\begin{split}
\Phi_Z\cap\Phi_{n,-2d} &=\{\,\Lambda_M\mid M\subset Z_\rmI,\ {\rm def}(M)=d\,\},\\
\Phi_n &=\bigcup_{Z\in\Phi_{n,0},\ \text{ special}}\Phi_Z.
\end{split}
\end{align}

\begin{exam}
We have the following special symbols $Z$ in $\Phi_{4,0}$:
\[
\begin{tabular}{c|c|cccccc|cc}
$Z$ & $\binom{3,1}{2,0}$ & $\binom{4}{0}$ & $\binom{3}{1}$ & $\binom{4,1}{1,0}$ & $\binom{4,2,1}{2,1,0}$ & $\binom{3,2,1}{3,1,0}$ & $\binom{4,3,2,1}{3,2,1,0}$ & $\binom{2}{2}$ & $\binom{2,1}{2,1}$ \\
\midrule
$Z_\rmI$ & $\binom{3,1}{2,0}$ & $\binom{4}{0}$ & $\binom{3}{1}$ & $\binom{4}{0}$ & $\binom{4}{0}$ & $\binom{2}{0}$ & $\binom{4}{0}$ & $\binom{-}{-}$ & $\binom{-}{-}$ \\
\midrule
$|\Phi_Z|$ & $16$ & $4$ & $4$ & $4$ & $4$ & $4$ & $4$ & $1$ & $1$ \\
\end{tabular}
\]
\end{exam}

\begin{exam}
Let $Z=\binom{3,1}{2,0}\in\Phi_{4,0}$.
Then $\deg(Z)=2$ and we have the following $16$ symbols $\Lambda_M$ in $\Phi_Z$:
\[
\begin{tabular}{c|cccccccc}
\toprule
$M$ & $\binom{-}{-}$ & $\binom{3}{-}$ & $\binom{1}{-}$ & $\binom{-}{2}$ & $\binom{-}{0}$ & $\binom{3,1}{-}$ & $\binom{-}{2,0}$ & $\binom{3}{2}$ \\
\midrule
$\Lambda_M$ & $\binom{3,1}{2,0}$ & $\binom{1}{3,2,0}$ & $\binom{3}{2,1,0}$ & $\binom{3,2,1}{0}$ & $\binom{3,1,0}{2}$ & $\binom{-}{3,2,1,0}$ & $\binom{3,2,1,0}{-}$ & $\binom{2,1}{3,0}$ \\
\midrule
\midrule
$M$ & $\binom{1}{0}$ & $\binom{3}{0}$ & $\binom{1}{2}$ & $\binom{3,1}{2}$ & $\binom{3,1}{0}$ & $\binom{3}{2,0}$ & $\binom{1}{2,0}$ & $\binom{3,1}{2,0}$ \\
\midrule
$\Lambda_M$ & $\binom{3,0}{2,1}$ & $\binom{1,0}{3,2}$ & $\binom{3,2}{1,0}$ & $\binom{2}{3,1,0}$ & $\binom{0}{3,2,1}$ & $\binom{2,1,0}{3}$ & $\binom{3,2,0}{1}$ & $\binom{2,0}{3,1}$  \\
\bottomrule
\end{tabular}
\]
\end{exam}

\subsection{Second proof of Theorem~\ref{0101}}

\begin{lem}\label{0301}
Let $Z\in\Phi_{n,0}$ be a special symbol,
and let $\delta=\deg(Z)$.
Then
\[
\left|\Phi_Z\cap\left(\bigcup_{d\equiv 0\pmod 4}\Phi_{n,d}\right)\right|
-\left|\Phi_Z\cap\left(\bigcup_{d\equiv 2\pmod 4}\Phi_{n,d}\right)\right|=\begin{cases}
1, & \text{if $\delta=0$};\\
0, & \text{if $\delta\geq 1$}.
\end{cases}
\]
\end{lem}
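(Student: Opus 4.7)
The plan is to rewrite the left-hand side as a signed sum over subsymbols $M\subset Z_\rmI$, and then factor it as a product of two binomial sums.

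First, I would use the identity ${\rm def}(\Lambda_M)=-2\,{\rm def}(M)$ noted in the text, which implies that a symbol $\Lambda_M\in\Phi_Z$ lies in $\Phi_{n,d}$ with $d\equiv 0\pmod 4$ precisely when ${\rm def}(M)$ is even, and in $\Phi_{n,d}$ with $d\equiv 2\pmod 4$ precisely when ${\rm def}(M)$ is odd. Combined with the first line of (\ref{0202}), which says that $M\mapsto\Lambda_M$ is a bijection between the subsymbols of $Z_\rmI$ and the elements of $\Phi_Z$, this turns the quantity to be computed into
\[
\sum_{M\subset Z_\rmI}(-1)^{{\rm def}(M)}.
\]

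Next I would factor this sum. Since a subsymbol $M\subset Z_\rmI$ is nothing other than an independent choice of a subset $M^*\subset(Z_\rmI)^*$ and a subset $M_*\subset(Z_\rmI)_*$, and since ${\rm def}(M)=|M^*|-|M_*|$, the sign factors as $(-1)^{{\rm def}(M)}=(-1)^{|M^*|}(-1)^{|M_*|}$, so the sum decomposes as
\[
\biggl(\sum_{M^*\subset(Z_\rmI)^*}(-1)^{|M^*|}\biggr)\biggl(\sum_{M_*\subset(Z_\rmI)_*}(-1)^{|M_*|}\biggr).
\]
Since $|(Z_\rmI)^*|=|(Z_\rmI)_*|=\delta$ by the definition of $\deg(Z)$, the binomial theorem gives each factor as $(1-1)^\delta$, which equals $1$ if $\delta=0$ and $0$ if $\delta\geq 1$. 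The product therefore equals $1$ when $\delta=0$ and $0$ when $\delta\geq 1$, as required.

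There is no real obstacle here; the only point requiring care is the parity translation ${\rm def}(\Lambda_M)\equiv 0\text{ or }2\pmod 4 \Longleftrightarrow {\rm def}(M)\text{ even or odd}$, which rests on the factor $-2$ in ${\rm def}(\Lambda_M)=-2\,{\rm def}(M)$. Once this is made explicit, the rest is the standard identity $(1-1)^\delta=0^\delta$ applied twice.
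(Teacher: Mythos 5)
Your proof is correct and follows essentially the same route as the paper: both reduce the difference to a signed count of subsymbols $M\subset Z_\rmI$ and kill it with the binomial theorem, the only cosmetic difference being that you factor the sign as $(-1)^{|M^*|}(-1)^{|M_*|}$ and get $(1-1)^\delta\cdot(1-1)^\delta$, whereas the paper uses ${\rm def}(M)\equiv|M^*|+|M_*|\pmod 2$ and gets a single $(1-1)^{2\delta}$. Your parity translation via ${\rm def}(\Lambda_M)=-2\,{\rm def}(M)$ is exactly the step the paper relies on as well, so nothing is missing.
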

\begin{proof}
Note that $|(Z_\rmI)^*|+|(Z_\rmI)_*|=2\delta$,
and ${\rm def}(M)\equiv |M|\pmod 2$.
Then from (\ref{0202}), we have
\begin{align*}
\left|\Phi_Z\cap\left(\bigcup_{d\equiv 0\pmod 4}\Phi_{n,d}\right)\right|
&=|\{\,M\subset Z_\rmI\mid |M^*|+|M_*|\text{ even}\,\}|
=\sum_{k\geq 0,\text{ even}}C^{2\delta}_k, \\
\left|\Phi_Z\cap\left(\bigcup_{d\equiv 2\pmod 4}\Phi_{n,d}\right)\right|
&=|\{\,M\subset Z_\rmI\mid |M^*|+|M_*|\text{ odd}\,\}|
=\sum_{k\geq1,\text{ odd}}C^{2\delta}_k
\end{align*}
where $C^l_m=\frac{l!}{m!(l-m)!}$ is a binomial coefficient.
Note that by convention $C^l_m=0$ if $l,m\in\bbN\cup\{0\}$ and $l<m$.
It is well known from the binomial theorem that
\[
\sum_{k\geq 0,\text{ even}}C^{2\delta}_k-\sum_{k\geq1, \text{ odd}}C^{2\delta}_k=\begin{cases}
1, & \text{if $\delta=0$};\\
0, & \text{if $\delta\geq 1$}.
\end{cases}
\]
Then the lemma is proved.
\end{proof}

\begin{cor}\label{0304}
Let $n$ be a nonnegative integer.
Then $|\Phi_n^+|-|\Phi^-_n|=p(\tfrac{n}{2})$.
\end{cor}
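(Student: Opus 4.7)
The plan is to combine the two ingredients already assembled in this section: the decomposition of $\Phi_n$ indexed by special symbols, and the local sign-count of Lemma~\ref{0301}.

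First, I would observe that the union $\Phi_n=\bigcup_{Z}\Phi_Z$ in (\ref{0202}), ranging over special $Z\in\Phi_{n,0}$, is in fact a disjoint union: any $\Lambda\in\Phi_n$ uniquely determines a special symbol $Z$ with $\Lambda\in\Phi_Z$, namely the one obtained by rearranging the multiset of entries of $\Lambda$ into the interleaving pattern $a_1\geq b_1\geq a_2\geq b_2\geq\cdots$. Consequently,
\[
|\Phi_n^+|-|\Phi_n^-|
=\sum_{Z\in\Phi_{n,0}\text{ special}}\bigl(|\Phi_Z\cap\Phi_n^+|-|\Phi_Z\cap\Phi_n^-|\bigr).
\]

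Second, I would invoke Lemma~\ref{0301} term-by-term: each summand is $1$ when $\deg(Z)=0$ and $0$ when $\deg(Z)\geq 1$. Hence the right-hand side counts special symbols $Z\in\Phi_{n,0}$ with $\deg(Z)=0$. From the definition $Z_\rmI=Z\smallsetminus\binom{Z^*\cap Z_*}{Z^*\cap Z_*}$, the vanishing $\deg(Z)=|(Z_\rmI)^*|=0$ together with $\mathrm{def}(Z)=0$ forces $Z^*=Z_*$, which is exactly the condition that $Z$ be degenerate; conversely every degenerate symbol is special (as noted in the paper).

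Third, I would push this count through the bijection $\Upsilon\colon\Phi_{n,0}\to\calp_2(n)$ of (\ref{0305}). It sends degenerate symbols to degenerate bipartitions $\sqbinom{\lambda}{\lambda}\in\calp_2(n)$, and Section~2 has already identified the set of such bipartitions with $\calp(\tfrac{n}{2})$. Therefore the number of degenerate special symbols of rank $n$ equals $p(\tfrac{n}{2})$, with the understanding that this is $0$ when $n$ is odd (which is consistent, because the rank of a degenerate symbol $\binom{A}{A}$ is $2\sum_i a_i-|A|(|A|-1)$, manifestly even).

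The only point demanding a little care is the disjointness of the decomposition in (\ref{0202}); once that is verified, the corollary is just Lemma~\ref{0301} summed over $Z$ and translated through $\Upsilon$.
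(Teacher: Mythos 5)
Your proposal is correct and follows essentially the same route as the paper: decompose $\Phi_n^{\pm}$ over the special symbols $Z$ via (\ref{0202}), apply Lemma~\ref{0301} to each $\Phi_Z$, and identify the surviving count of degenerate special symbols with $p(\tfrac{n}{2})$. The extra details you supply (disjointness of the decomposition, the equivalence $\deg(Z)=0\Leftrightarrow Z$ degenerate, and the passage through $\Upsilon$ to degenerate bipartitions) are all correct and are left implicit in the paper's one-line computation.
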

\begin{proof}
From (\ref{0306}) and (\ref{0202}), we have
\begin{align*}
\Phi^+_n=\bigcup_{Z\in\Phi_{n,0},\text{ special}}\left(\Phi_Z\cap\bigcup_{d\equiv 0\pmod 4}\Phi_{n,d}\right),\\
\Phi^-_n=\bigcup_{Z\in\Phi_{n,0},\text{ special}}\left(\Phi_Z\cap\bigcup_{d\equiv 2\pmod 4}\Phi_{n,d}\right).
\end{align*}
Then by Lemma~\ref{0301}, we have
$|\Phi_n^+|-|\Phi^-_n|=\sum_{Z\in\Phi_{n,0},\ \deg(Z)=0} 1=p(\tfrac{n}{2})$.
\end{proof}

\begin{proof}[Second Proof of Theorem~\ref{0101}]
From (\ref{0305}),
we know that $|\Phi_{n,d}|=p_2\bigl(n-(\tfrac{d}{2})^2\bigr)$,
and from (\ref{0306}), we have
\[
|\Phi^+_n| =\sum_{k\in\bbZ,\text{ even}} p_2\bigl(n-k^2\bigr),\qquad
|\Phi^-_n| =\sum_{k\in\bbZ,\text{ odd}} p_2\bigl(n-k^2\bigr).
\]
Then the theorem follows from Corollary~\ref{0304} immediately.
\end{proof}

\begin{rem}\label{0302}
Let $\bfF_q$ denote the finite field of $q$ elements where $q$ is a power of an odd prime.
From Lusztig's theory, for $\epsilon\in\{+,-\}$, the set $\cale(\rmO^\epsilon_{2n}(\bfF_q),1)$
of unipotent characters of the finite even orthogonal group $\rmO^\epsilon_{2n}(\bfF_q)$
is parametrized by the set $\Phi^\epsilon_n$ (\cf.~\cite{lg} theorem 8.2, see also \cite{pan-finite-unipotent} \S 3.3).
Hence Corollary~\ref{0304} means that $\rmO^+_{2n}(\bfF_q)$ has $p(\frac{n}{2})$ more unipotent characters than
$\rmO^-_{2n}(\bfF_q)$.
\end{rem}

\begin{rem}\label{0303}
Another consequence of Theorem~\ref{0101} is that the number of quadratic unipotent conjugacy classes of
$\rmO^\epsilon_{2n}(\bfF_q)$ is equal to the number of quadratic unipotent characters of
$\rmO^\epsilon_{2n}(\bfF_q)$ for $\epsilon\in\{+,-\}$.
The detail will appear in a paper in preparation by the second author.
\end{rem}

\appendix

\section{A Congruence Relation for $p_2(n)$}

The following theorem on a congruence relation for $p_2(n)$ has been proved in \cite{HL} and \cite{CJW}.
Below we provide another proof along the approach taken by Michael Hirschhorn~\cite{Hirschhorn}.

\begin{thm}\label{thm:bi-congruence}
We have
\[
p_2(5n+2)\equiv p_2(5n+3)\equiv p_2(5n+4)\equiv 0\pmod 5
\]
for any nonnegative integer $n$.
\end{thm}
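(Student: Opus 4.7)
The plan is to exploit the generating function
\[
\sum_{n\geq 0}p_2(n)z^n=\prod_{k=1}^\infty(1-z^k)^{-2}
\]
together with the classical congruence $\prod_{k=1}^\infty(1-z^k)^5\equiv\prod_{k=1}^\infty(1-z^{5k})\pmod 5$, which follows at once from Fermat's little theorem and the multinomial theorem. Writing $\phi(z):=\prod_{k=1}^\infty(1-z^k)$ for brevity, these ingredients combine into
\[
\sum_{n\geq 0}p_2(n)z^n
=\frac{\phi(z)^3}{\phi(z)^5}
\equiv\phi(z)^3\sum_{m\geq 0}p(m)z^{5m}\pmod 5,
\]
so the whole problem is reduced to understanding $\phi(z)^3\pmod 5$.

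Next I would invoke Jacobi's identity (\cf.~\cite{andrews})
\[
\phi(z)^3=\sum_{n\geq 0}(-1)^n(2n+1)z^{n(n+1)/2},
\]
which together with the previous display yields, for every nonnegative integer $N$,
\[
p_2(N)\equiv\sum_{\substack{n,m\geq 0\\ n(n+1)/2+5m=N}}(-1)^n(2n+1)\,p(m)\pmod 5.
\]
The heart of the argument is now an elementary residue check: as $n$ ranges over the nonnegative integers, the triangular number $\tfrac{n(n+1)}{2}$ assumes the residues $0,1,3,1,0$ modulo~$5$ according as $n\equiv 0,1,2,3,4\pmod 5$, so it never lies in the set $\{2,4\}$ modulo~$5$.

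Consequently, when $N\equiv 2$ or $N\equiv 4\pmod 5$ the above sum is empty, so $p_2(N)\equiv 0\pmod 5$ at once. When $N\equiv 3\pmod 5$, the only contributing indices come from $n\equiv 2\pmod 5$, in which case $2n+1\equiv 0\pmod 5$ forces every summand to vanish modulo~$5$; this yields $p_2(5n+3)\equiv 0\pmod 5$. I do not foresee any serious obstacle: both classical ingredients (Jacobi's identity and the congruence $\phi(z)^5\equiv\phi(z^5)\pmod 5$) are standard, and the only modest piece of insight is to use Jacobi's identity in place of Euler's pentagonal-number theorem, precisely because the triangular exponents $\tfrac{n(n+1)}{2}$ avoid the residues $2$ and $4$ modulo~$5$ while the coefficient $2n+1$ conveniently kills the one remaining ``bad'' residue class.
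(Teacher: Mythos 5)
Your proof is correct, and it takes a genuinely different route from the paper. You work modulo $5$ throughout: writing $\phi(z)=\prod_{k\ge1}(1-z^k)$, you use $\phi(z)^5\equiv\phi(z^5)\pmod 5$ to reduce $\phi(z)^{-2}=\phi(z)^3\phi(z)^{-5}$ to $\phi(z)^3\sum_m p(m)z^{5m}$, and then Jacobi's identity $\phi(z)^3=\sum_{n\ge0}(-1)^n(2n+1)z^{n(n+1)/2}$ does the rest, since triangular numbers avoid the residues $2$ and $4$ modulo $5$ and the coefficient $2n+1$ vanishes modulo $5$ exactly on the residue class $n\equiv2$ producing exponent $\equiv3$. (The one step worth spelling out is why the congruence survives inversion of $\phi(z)^5$: since $1/\phi(z)^5$ and $1/\phi(z^5)$ both have integer coefficients, their difference equals $(\phi(z)^5-\phi(z^5))\cdot\phi(z)^{-5}\phi(z^5)^{-1}$, which is divisible by $5$.) The paper instead follows Hirschhorn: it squares an exact $5$-dissection of $\sum p(n)q^n$ expressed via the Rogers--Ramanujan continued fraction $R(q)$, and reads off the terms with exponent $\equiv2,3,4\pmod 5$. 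Your argument is shorter and more self-contained, needing only two classical identities and an elementary residue check; the paper's computation buys more, namely explicit closed-form generating functions for $\sum_n p_2(5n+j)q^{5n+j}$ ($j=2,3,4$), which exhibit the exact factor of $5$ and are the natural starting point for congruences modulo higher powers of $5$.
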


Before the proof, we first set up some notations.
Let
\[
(a;q)= \prod_{k=0}^\infty (1 - aq^k),
\]
and define
\[
R(q) = \frac{ (q^2; q^5) (q^3; q^5) }{ (q; q^5) (q^4; q^5) }\qquad\text{and}\qquad
c = R(q^5).
\]
Then Hirschhorn obtained (see also \cite{aigner2007course} p.140)
\begin{equation}\label{eq:partition-modulo}
\sum_{n=0}^\infty p(n) q^n
= \frac{ (q^{25}; q^{25})^5 }{ (q^5; q^5)^6 } f(q),
\end{equation}
where
\[
f(q) = c^4 + c^3 q + 2c^2 q^2 + 3c q^3 + 5q^4 - 3 c^{-1} q^5 + 2 c^{-2} q^6 - c^{-3} q^7 + c^{-4}q^8.
\]
From (\ref{eq:partition-modulo}) and the fact that the expansion of $c$ only involves powers of $q^5$,
Hirschhorn obtained
\[
\sum_{n=0}^\infty p(5n+4)q^{5n+4}
=5\frac{(q^{25};q^{25})^5}{(q^5;q^5)^6}q^4
\]
and concluded the famous congruence by Ramanujan:
$p(5n+4) \equiv 0 \pmod{5}$.

\begin{proof}[Proof of Theorem~\ref{thm:bi-congruence}]
We square both sides of (\ref{eq:partition-modulo}) and reach
\begin{equation}\label{eq:bipartition-modulo}
\sum_{n=0}^\infty p_2(n) q^n
= \left( \sum_{n \geqslant 0} p(n) q^n \right)^2
= \frac{ (q^{25}; q^{25})^{10} }{ (q^5; q^5)^{12} } f(q)^2,
\end{equation}
and
\begin{multline*}
f(q)^2 = c^{8} + 2 c^{7} q + 5  c^{6} q^{2} + 10 c^{5} q^{3} + 20 c^{4} q^{4} + 16 c^{3} q^{5} \\
+ 27 c^{2} q^{6} + 20 c q^{7} + 15 q^{8} - 20 c^{-1} q^{9} + 27 c^{-2} q^{10} - 16 c^{-3} q^{11} \\
+ 20 c^{-4} q^{12} - 10 c^{-5} q^{13} + 5 c^{-6} q^{14} - 2 c^{-7} q^{15} + c^{-8} q^{16}.
\end{multline*}
Using the fact again that the expansion of $c$ only involves powers of $q^5$,
we obtain
\begin{align*}
\sum_{n=0}^\infty p_2(5n+2) q^{5n+2}
&= 5 \frac{ (q^{25}; q^{25})^{10} }{ (q^5; q^5)^{12} } \left( c^6 q^2 + 4 c q^7 + 4 c^{-4} q^{12} \right), \\
\sum_{n=0}^\infty p_2(5n+3) q^{5n+3}
&= 5 \frac{ (q^{25}; q^{25})^{10} }{ (q^5; q^5)^{12} } \left( 2 c^5 q^3 + 3 q^8 - 2 c^{-5} q^{13} \right), \\
\sum_{n=0}^\infty p_2(5n+4) q^{5n+4}
&= 5 \frac{ (q^{25}; q^{25})^{10} }{ (q^5; q^5)^{12} } \left( 4 c^4 q^4 - 4 c^{-1} q^9 + c^{-6} q^{14} \right).
\end{align*}
The theorem now follows from these identities immediately.
\end{proof}

\bibliography{refer}
\bibliographystyle{amsalpha}

\end{document}